\documentclass{amsart}
%%%%%%%%%%%%%%%%%%%%%%%%%%%%%%%%%%%%%%%%%%%%%%%%% Packages
\usepackage{amsmath,amsfonts,amsthm}
\usepackage{amssymb,latexsym}
\usepackage{graphics}
\usepackage[colorlinks]{hyperref}

\usepackage[all]{xypic}
%\usepackage{amssymb}
%\usepackage{xspace, enumerate}

%\usepackage{vmargin}
%\setmarginsrb{16mm}{10mm}{16mm}{10mm}%
% {8mm}{7mm}{8mm}{8mm}
%%%%%%%%%%%%%%%%%%%%%%%%%%%%%%%%%%%%%%%%%%%%%%%%% New commands
\theoremstyle{plain}
\theoremstyle{definition}
\newtheorem{theorem}{Theorem}[section]
\newtheorem{lemma}[theorem]{Lemma}

\newtheorem{example}[theorem]{Example}

\newtheorem{note}[theorem]{Note}

\newtheorem{convention}[theorem]{Convention}

\theoremstyle{remark}

\numberwithin{equation}{section}
\newcommand{\SP}{\: \: \: \: \:}

%%%%%%%%%%%%%%%%%%%%%%%%%%%%%%%%%%%% Article information
\title{On generalized shift transformation semigroups}
\author[F. Ayatollah Zadeh Shirazi, F. Ebrahimifar]{Fatemah Ayatollah Zadeh Shirazi, Fatemeh Ebrahimifar}
%\date{96/5/26}
\begin{document}
%%%%%%%%%%%%%%%%%%%%%%%%%%%%%%%%%%%% abstract
\begin{abstract}
In the following text we prove that for finite discrete $X$ with at least two elements and infinite $\Gamma$, 
the generalized shift transformation semigroup $({\mathcal S},X^\Gamma)$ is equicontinuous (resp.  has at 
least an equicontinuous point, is not sensitive) if and only if for all $w\in\Gamma$, $\{\varphi(w):\sigma_\varphi\in{\mathcal S}\}$ 
is finite. We continue our study regarding distality and expansivity of $({\mathcal S},X^\Gamma)$.
\end{abstract}
\maketitle
%%%%%%%%%%%%%%%%%%%%%%%%%%%%%%%%%%%% MSC
\noindent {\small {\bf 2010 Mathematics Subject Classification:}  54H20, 37B05 \\
{\bf Keywords:}} distal, equicontinuous, expansive, generalized shift, sensitive, transformation semigroup, weakly almost periodic.
%%%%%%%%%%%%%%%%%%%%%%%%%%%%%%%%%%%%
\section{Introduction}
\noindent The concept of generalized shifts has been introduced for the first time in \cite{2008} 
as a generalization of one-sided shift 
$\mathop{\{1,\ldots,k\}^{\mathbb N}\to\{1,\ldots,k\}^{\mathbb N}}\limits_{(a_1,a_2,\cdots)\mapsto(a_2,a_3,\cdots)}$ and two-sided shift 
$\mathop{\{1,\ldots,k\}^{\mathbb Z}\to\{1,\ldots,k\}^{\mathbb Z}}\limits_{(a_n)_{n\in{\mathbb Z}}\mapsto(a_{n+1})_{n\in{\mathbb Z}}}$, 
which are well-known in dynamical systems' approach and ergodic theory \cite{1982}. 
Suppose $K$ is a nonempty set with at least two elements, $\Gamma$ is a nonempty set, 
and $\varphi:\Gamma\to\Gamma$ is an arbitrary map, then 
$\sigma_\varphi:\mathop{K^\Gamma\to K^\Gamma\SP\:}\limits_{(x_\alpha)_{\alpha\in\Gamma}\mapsto(x_{\varphi(\alpha)})_{\alpha\in\Gamma}}$ 
denotes a generalized shift. It's evident that for topological space $K$, $\sigma_\varphi: K^\Gamma\to K^\Gamma$ 
is continuous, where $K^\Gamma$ is equipped with product topology, moreover if $K$ has a group structure, then 
$\sigma_\varphi: K^\Gamma\to K^\Gamma$ is a group homomorphism. 
Dynamical (and non-dynamical) properties of generalized shifts has been studied in several papers, 
like \cite{2013} and \cite{2010}. In the following text we study equicontinuity and distality in 
$({\mathcal S},X^\Gamma)$, where $X$ is a finite discrete space and ${\mathcal S}$ is a semigroup of 
generalized shifts on $X^\Gamma$.
%%%%%%%%%%%%%%%%%%%%%%%%%%%%%%%%%%%%
\section{Priliminaries}
\subsection*{Background on uniform spaces} Let's recall that for arbitrary set $Y$ we say ${\mathcal K}$ is a 
{\it uniformity} on $Y$ if $\mathcal K$ is a collection of subsets of $Y\times Y$ such that:
\begin{itemize}
\item $\forall\alpha\in{\mathcal K}\SP(\Delta_Y\subseteq\alpha)$;
\item $\forall\alpha,\beta\in{\mathcal K}\SP(\alpha\cap\beta\in{\mathcal K})$;
\item $\forall\alpha\in{\mathcal K}\SP\forall\beta\subseteq Y\times Y\SP(\alpha\subseteq\beta\Rightarrow\beta\in{\mathcal K})$;
\item $\forall\alpha\in{\mathcal K}\SP\exists\beta\in{\mathcal K}\SP(\beta\circ\beta^{-1}\subseteq\alpha)$;
\end{itemize}
where $\Delta_Y=\{(y,y):y\in Y\}$ and for $\alpha,\beta\subseteq Y\times Y$ we have 
$\alpha^{-1}=\{(y,x): (x,y)\in\alpha\}$, $\alpha\circ\beta=\{(x,y):\exists z\:((x,z)\in\alpha\wedge(z,y)\in\beta)\}$. 
If $\mathcal K$ is a uniformity on $Y$ we call $(Y,{\mathcal K})$ a {\it uniform space}, also for all 
$\alpha\in{\mathcal K}$ and $x\in Y$, let $\alpha[x]:=\{y: (x,y)\in\alpha\}$; so 
$\tau:=\{U\subseteq Y:\exists\alpha\in{\mathcal K}\:(\alpha[x]\subseteq U)\}$ is a topology on 
$Y$, we call it the {\it uniform topology induced by} $\mathcal K$ and equip $(Y,{\mathcal K})$ 
by $\tau$. For topological space $Z$ we say $Z$ is {\it uniformzable} and uniformity $\mathcal H$ 
is a {\it compatible uniformity} on $Z$, if the uniform topology induced by $\mathcal H$ coincides 
with original topology of $Z$. Every compact Hausdorff topological space is uniformzable and has a 
unique compatible uniformity. For more details on uniform structures we refer the interested reader to \cite{1966}.
\\
If $A$ is a collection of maps from uniform space $(Y,{\mathcal K}_Y)$ to uniform space 
$(Z,{\mathcal K}_Z)$, we say $A$ is {\it equicontinuous} if for all $\alpha\in{\mathcal K}_Z$, 
there exists $\beta\in{\mathcal K}_Y$ with 
\linebreak
$A\beta:=\{(f(x),f(y)):f\in A,(x,y)\in\beta\}\subseteq\alpha$.
%%%%%%%%%%%%%%%%%%%%%%%%%%%%%%%%%%%%
\subsection*{Background on transformation semigroups}
By a {\it (topological) transformation semigroup} $(S,Y,\pi)$ or simply $(S,Y)$ we mean a compact Hausdorff topological space $Y$ ({\it phase space}), discrete topological semigroup $S$ ({\it phase semigroup}) with identity $e$ and continuous map $\pi:S\times Y\to Y$ ($\pi(s,x)=sx$ for $s\in S$ and $x\in Y$) such that for all $x\in Y$ and $s,t\in S$ we have $ex=x$ and $(st)x=s(tx)$. Consider the transformation semigroup $(S,Y)$ with unique compatible uniformity $\mathcal H$ on $Y$, we say:
\begin{itemize}
\item $(S,Y)$ is {\it equicontinuous} if for all $\alpha\in {\mathcal H}$ 
	there exists $\beta\in {\mathcal H}$ with 
	\linebreak
	$S\beta:=\{(sx,sy):s\in S,(x,y)\in\beta\}\subseteq\alpha$;
\item $x\in Y$ is an {\it equicontinuous point of} $(S,Y)$ if for all $\alpha\in {\mathcal H}$ 
	there exists open neighbourhood $U$ of $x$ with $\{(sx,sy):s\in S,,y\in U\}\subseteq\alpha$;
\item $(S,Y)$ is {\it expansive} if there exists $\alpha\in{\mathcal H}$ such that for all 
	distinct $x,y\in Y$ there exists $s\in S$ with $(sx,sy)\notin\alpha$;
\item $(S,Y)$ is {\it sensitive} if there exists $\alpha\in{\mathcal H}$ such that for all 
	$x\in Y$ and open neighbourhood $U$ of $x$ there exists $s\in S$ and $y\in U$ with 
	$(sx,sy)\notin\alpha$. So if $(S,Y)$ is sensitive, then it has a non-equicontinuous point and it is not equicontinuous.
\end{itemize}
Moreover in the transformation semigroup $(S,Y)$ for all $s\in S$ the map $\pi^s:Y\to Y$ with $\pi^s(y)=sy$ 
(for $y\in Y$) is continuous, sometimes we denote $\pi^s$ simply by $s$. We call the 
closure of $\{\pi^s:s\in S\}$ in $Y^Y$ with pointwise convergence (or product) topology, 
the {\it enveloping semigroup} and denote it by $E(S,Y)$ or simply by $E(Y)$. If $(S,Y)$ 
is an equicontinuous transformation semigroup, then $E(S,Y)$ is an equicontinuous family 
on $Y$ \cite{1969} and \cite{1979}, in particular all elements of $E(S,Y)$ are continuous maps on  $Y$. 
The enveloping semigroup has a semigroup operation under the composition of maps \cite{1969}. 
In the transformation semigroup $(S,Y)$ we say:
\begin{itemize}
\item $(S,Y)$ is {\it distal} if it satisfies each of the following equivalent conditions 
	(hence, in distal transformation semigroup $(S,Y)$  all elements of $E(S,Y)$  
	(and in particular $S$) are bijections from $Y$ to $Y$) \cite{1969}:
	\begin{itemize}
	\item $E(S,Y)$  is a group;
	\item for all $x,y,z\in Y$ and net $\{s_\lambda\}_{\lambda\in\Lambda}$ in $S$ with 
		$\mathop{\lim}\limits_{\lambda\in\Lambda} s_\lambda x=z=\mathop{\lim}\limits_{\lambda\in\Lambda} s_\lambda y$ we have $x=y$;
	\end{itemize}
\item $(S,Y)$  is {\it weakly almost periodic}, if all elements of $E(S,Y)$  are 
	continuous maps on $Y$ (so, it's evident that all equicontinuous transformation semigroups are weakly almost periodic) \cite{1998}.
\end{itemize}
%%%%%%%%%%%%%%%%%%%%%%%%%%%%%%%%%%%%
\begin{convention}\label{convention21}
In the following text suppose $X$ is a finite discrete space with at least two elements, 
$\Gamma$ is an infinite set, $\mathcal S$ is a semigroup of generalized shifts on 
$X^\Gamma$ (equip $X^\Gamma$ with product topology) containing the identity map on 
$X^\Gamma$, ${\mathcal T}=\{\varphi\in\Gamma^\Gamma:\sigma_\varphi\in{\mathcal S}\}$. 
Thus we may consider the transformation semigroup $({\mathcal S},X^\Gamma)$ where the 
elements of ${\mathcal S}$ acts in a natural way on $X^\Gamma$. For $H\subseteq \Gamma$, let:
\[\alpha_H:=\{((x_w)_{w\in\Gamma},(y_w)_{w\in\Gamma})\in X^\Gamma\times X^\Gamma:\forall w\in H\SP(x_w=y_w)\}\:.\] 
Then
\[\mathcal{F}:=\{\alpha\subseteq X^\Gamma\times X^\Gamma:{\rm \: there \: exists \: a\: finite\: subset\:}H{\rm \: of\:} \Gamma\:{\rm with}\:\alpha_H\subseteq\alpha\}\]
is the unique compatible uniformity on $X^\Gamma$.
\\
On the other hand one may verify that for all $\varphi,\eta:\Gamma\to\Gamma$, we have 
\begin{itemize}
\item $\sigma_\varphi\circ\sigma_\eta=\sigma_{\eta\circ\varphi}$;
\item $\sigma_\varphi=\sigma_\eta$ if and only if $\varphi=\eta$;
\item $\sigma_{{\rm id}_\Gamma}={\rm id}_{X^\Gamma}$ (where by ${\rm id}_Z:Z\to Z$ we mean ${\rm id}_Z(w)=w  (w\in Z)$).
\end{itemize}
Therefore, if $\mathcal M$ is a family of generalized shift on $X^\Gamma$, $\mathcal M$ is a semigroup 
(containing the identity map on $X^\Gamma$) if and only if $\{\varphi\in\Gamma^\Gamma:\sigma_\varphi\in{\mathcal M}\}$ 
is a semigroup (containing the identity map on $\Gamma$), in addition 
$\mathop{\{\varphi\in\Gamma^\Gamma:\sigma_\varphi\in{\mathcal M}\}\to{\mathcal M}}\limits_{\eta\mapsto\sigma_\eta}$ 
is bijective. In particular, $\mathcal T$ is a semigroup (under the operation of composition of maps) containing 
${\rm id}_\Gamma$ and $\mathop{{\mathcal T}\to{\mathcal S}}\limits_{\eta\mapsto\sigma_\eta}$ is bijective.
\end{convention}
%%%%%%%%%%%%%%%%%%%%%%%%%%%%%%%%%%%%
\begin{note}\label{note22}
The set $\{\sigma_\varphi:\varphi\in\Gamma^\Gamma\}$ is a closed subset of $C(X^\Gamma, X^\Gamma)$ 
(the collection of all continuous maps from $ X^\Gamma$ to $ X^\Gamma$) with pointwise convergence (product) 
topology \cite{2015}, so all continuous elements of $E({\mathcal S},X^\Gamma)$ has the form $\sigma_\varphi$ for 
some $\varphi:\Gamma\to\Gamma$. In particular, $({\mathcal S},X^\Gamma)$ is weakly almost periodic if and only 
if $E({\mathcal S},X^\Gamma)\subseteq\{\sigma_\varphi:\varphi\in\Gamma^\Gamma\}$.
\\
We call $a\in A$ a {\it quasi-periodic point} (resp. {\it periodic point}) of $h:A\to A$, 
if there exist $n>m\geq1$ such that $h^n(a)=h^m(a)$ (resp. $h^m(a)=a$). For $f:A\to B$ and 
$C\subseteq A$, by $f\restriction_C:C\to B$ ($f\restriction_C(x)=f(x) (x\in C)$) we mean the restriction of $f$ to $C$.
\end{note}
%%%%%%%%%%%%%%%%%%%%%%%%%%%%%%%%%%%%
%%%%%%%%%%%%%%%%%%%%%%%%%%%%%%%%%%%%
%%%%%%%%%%%%%%%%%%%%%%%%%%%%%%%%%%%%
%%%%%%%%%%%%%%%%%%%%%%%%%%%%%%%%%%%%
\section{When is $({\mathcal S},X^\Gamma)$ equicontinuous?}
\noindent In this section we prove that $({\mathcal S},X^\Gamma)$ is equicontinuous if and only if for all $w\in\Gamma$, ${\mathcal T}w$ is finite.
%%%%%%%%%%%%%%%%%%%%%%%%%%%%%%%%%%%%
\begin{lemma}\label{lemma31}
If $({\mathcal S},X^\Gamma)$ has an equicontinuous point, then for all $w\in \Gamma$, ${\mathcal T}w$ is finite.
\end{lemma}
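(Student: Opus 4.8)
### Setting up the proof

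The plan is to prove the contrapositive: I would show that if there exists some $w_0 \in \Gamma$ for which $\mathcal{T}w_0 = \{\varphi(w_0) : \varphi \in \mathcal{T}\}$ is infinite, then $(\mathcal{S}, X^\Gamma)$ has \emph{no} equicontinuous point. So fix such a $w_0$, pick an arbitrary candidate point $z = (z_w)_{w\in\Gamma} \in X^\Gamma$, and argue that $z$ fails to be equicontinuous. The natural entourage to test against is $\alpha := \alpha_{\{w_0\}}$, i.e. the entourage recording agreement in the single coordinate $w_0$; this lies in $\mathcal{F}$ and hence in the compatible uniformity.

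### The key mechanism

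The main idea is to exploit how a generalized shift $\sigma_\varphi$ reads coordinates: for any point $u = (u_\gamma)_{\gamma} \in X^\Gamma$, we have $(\sigma_\varphi u)_{w_0} = u_{\varphi(w_0)}$. So to make $\sigma_\varphi z$ and $\sigma_\varphi y$ differ in coordinate $w_0$, I only need $z$ and $y$ to differ in the single coordinate $\varphi(w_0)$. First I would fix an arbitrary basic open neighbourhood $U$ of $z$, which is determined by a finite set $F \subseteq \Gamma$ of constrained coordinates (points agreeing with $z$ on $F$). Since $X$ has at least two elements, I would then use the infinitude of $\mathcal{T}w_0$ to select a $\varphi \in \mathcal{T}$ with $\varphi(w_0) \notin F$ — possible because only finitely many values are blocked by $F$, while $\{\varphi(w_0):\varphi\in\mathcal{T}\}$ is infinite. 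Next I would define $y \in U$ by setting $y_\gamma = z_\gamma$ for every $\gamma \neq \varphi(w_0)$ and choosing $y_{\varphi(w_0)} \neq z_{\varphi(w_0)}$; then $y \in U$ since $\varphi(w_0) \notin F$, yet $(\sigma_\varphi z)_{w_0} = z_{\varphi(w_0)} \neq y_{\varphi(w_0)} = (\sigma_\varphi y)_{w_0}$, so $(\sigma_\varphi z, \sigma_\varphi y) \notin \alpha_{\{w_0\}}$. Since $U$ was arbitrary, $z$ is not equicontinuous, and since $z$ was arbitrary, the system has no equicontinuous point.

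### The main obstacle

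I expect the genuine subtlety to lie in the coordinate bookkeeping rather than in any deep structural argument: one must be careful that $\varphi(w_0) \notin F$ is exactly what guarantees $y \in U$ while still permitting $y$ and $z$ to disagree precisely at the read-off coordinate $\varphi(w_0)$. The quantifier order also matters — equicontinuity at $z$ would require a \emph{single} neighbourhood $U$ working for \emph{all} $s \in \mathcal{S}$, whereas I produce, for every candidate $U$, a shift $\sigma_\varphi$ and a companion point $y\in U$ breaking the entourage $\alpha_{\{w_0\}}$; this correctly negates the equicontinuity-at-a-point definition. Everything else (that $\alpha_{\{w_0\}} \in \mathcal{F}$, that $\mathrm{id}_\Gamma$ membership is irrelevant here, and that the finite-$F$ description of basic neighbourhoods in the product topology is valid) is routine, so I would keep those verifications brief.
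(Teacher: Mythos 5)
Your proof is correct, but it runs in the opposite direction from the paper's. The paper argues directly: given an equicontinuous point $q=(q_w)_{w\in\Gamma}$ and a coordinate $v$, it takes the entourage $\alpha_{\{v\}}$, extracts a finite set $H$ with $\alpha_H[q]$ contained in the guaranteed neighbourhood $U$, and then tests with \emph{two} points that agree with $q$ on $H$ but take two different constant values everywhere off $H$; equicontinuity forces both to agree with $q$ at the coordinate $\varphi(v)$ for every $\varphi\in\mathcal{T}$, which is only possible if $\varphi(v)\in H$, giving the explicit containment $\mathcal{T}v\subseteq H$. You instead prove the contrapositive: assuming some $\mathcal{T}w_0$ is infinite, you show every point $z$ fails to be equicontinuous by perturbing $z$ at a single coordinate $\varphi(w_0)$ chosen outside the finite constraint set of any given basic neighbourhood. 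Your mechanism is in fact the one the paper uses later, in Lemma~\ref{lemma34}, to show that non-equicontinuity implies sensitivity; your argument actually establishes the stronger conclusion that infiniteness of one orbit $\mathcal{T}w_0$ makes the system sensitive with entourage $\alpha_{\{w_0\}}$ (hence without any equicontinuous point), so in effect you have merged the content of Lemmas~\ref{lemma31} and~\ref{lemma34} into one argument. What the paper's direct version buys is the clean, quantitative containment $\mathcal{T}v\subseteq H$ tied to the specific neighbourhood witnessing equicontinuity at the given point; what yours buys is a single uniform construction that handles all points at once and exposes the sensitivity phenomenon. One small bookkeeping remark: you negate equicontinuity at $z$ by checking only basic neighbourhoods, which is legitimate since any neighbourhood contains a basic one, but this one-line reduction deserves to be said explicitly rather than left implicit in ``Since $U$ was arbitrary.''
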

%%%%%%%%%%%%%%%%%%%%%%%%%%%%%%%%%%%%
\begin{proof}
Suppose $(q_w)_{w\in\Gamma}$ is an equicontinuous point of $({\mathcal S},X^\Gamma)$. 
Choose $v\in\Gamma$, then there exists open neighbourhood $U$ of $(q_w)_{w\in\Gamma}$ 
with $\mathcal{S}\{(z, (q_w)_{w\in\Gamma}):z\in U\}\subseteq\alpha_{\{v\}}$. There exists 
finite subset $H$ of $\Gamma$ with $\alpha_H[(q_w)_{w\in\Gamma}]\subseteq U$, hence 
for all $\varphi\in{\mathcal T}$ we have 
\[\{(\sigma_\varphi((z_w)_{w\in\Gamma}),\sigma_\varphi((q_w)_{w\in\Gamma}))
	:(z_w)_{w\in\Gamma}\in\alpha_H[(q_w)_{w\in\Gamma}]\}\subseteq\alpha_{\{v\}}\:.\]
Choose distinct $p,q\in X$ and let:
\[x_w:=\left\{\begin{array}{lc} q_w & w\in H\:, \\ p & w\in\Gamma\setminus H\:, 
	\end{array}\right.\SP{\rm and}\SP y_w:=\left\{\begin{array}{lc} q_w & w\in H\:, \\ qp & w\in\Gamma\setminus H\:. \end{array}\right.\]
Then $(x_w)_{w\in\Gamma} ,(y_w)_{w\in\Gamma}\in\alpha_H[(q_w)_{w\in\Gamma}]$, therefore
\[(\sigma_\varphi((x_w)_{w\in\Gamma}),\sigma_\varphi((q_w)_{w\in\Gamma})),
(\sigma_\varphi((y_w)_{w\in\Gamma}),\sigma_\varphi((q_w)_{w\in\Gamma}))\in\alpha_{\{v\}}\:.\]
Thus $x_{\varphi(v)}=q_{\varphi(v)}=y_{\varphi(v)}$ which leads to $\varphi(v)\in H$, $\varphi$ 
is an arbitrary element of $\mathcal T$, we have $\mathcal{T}v\subseteq H$. Since $H$ is finite, $\mathcal{T}v$ is finite too.
\end{proof}
%%%%%%%%%%%%%%%%%%%%%%%%%%%%%%%%%%%%
\begin{lemma}\label{lemma32}
If for all $w\in\Gamma$, $\mathcal{T}w$ is finite, then $({\mathcal S},X^\Gamma)$  is equicontinuous.
\end{lemma}
%%%%%%%%%%%%%%%%%%%%%%%%%%%%%%%%%%%%
\begin{proof}
Suppose for all $w\in\Gamma$, $\mathcal{T}w$  is finite and $\alpha\in{\mathcal F}$. 
There exist $\beta_1,\ldots,\beta_m\in\Gamma$ with $\alpha_{\{\beta_1,\ldots,\beta_m\}}\subseteq\alpha$. 
Thus $H:=\mathcal{T}\{\beta_1,\ldots,\beta_m \}=\{\varphi(\beta_i):\varphi\in{\mathcal T},1\leq i\leq m\}$ 
is a finite subset of $\Gamma$ and $\alpha_H\in{\mathcal F}$. We show 
${\mathcal S}\alpha_H \subseteq\alpha$, for this aim suppose 
$((x_w)_{w\in\Gamma} ,(y_w)_{w\in\Gamma})\in\alpha_H$ and consider:

$((x_w)_{w\in\Gamma} ,(y_w)_{w\in\Gamma})\in\alpha_H$
\begin{eqnarray*}
& \Rightarrow & (\forall w\in H\SP(x_w=y_w)) \\
& \Rightarrow & (\forall\varphi\in\mathcal{T}\SP\forall i\in\{1,\ldots,m\}\SP(x_{\varphi(\beta_i)}=y_{\varphi(\beta_i)})) \\
& \Rightarrow & (\forall\varphi\in\mathcal{T}\SP(((x_{\varphi(w)})_{w\in\Gamma} ,(y_{\varphi(w)})_{w\in\Gamma})\in\alpha_{\{\beta_1,\ldots,\beta_m\}}) \\
& \Rightarrow & (\forall\varphi\in\mathcal{T}\SP((\sigma_\varphi((x_w)_{w\in\Gamma} ),\sigma_\varphi((y_w)_{w\in\Gamma}))\in\alpha_{\{\beta_1,\ldots,\beta_m\}}) \\
& \Rightarrow & (\forall\sigma_\varphi\in\mathcal{S}\SP((\sigma_\varphi((x_w)_{w\in\Gamma} ),\sigma_\varphi((y_w)_{w\in\Gamma}))\in\alpha_{\{\beta_1,\ldots,\beta_m\}}) \\
& \Rightarrow & (\forall\sigma_\varphi\in\mathcal{S}\SP((\sigma_\varphi((x_w)_{w\in\Gamma} ),\sigma_\varphi((y_w)_{w\in\Gamma}))\in\alpha) 
\end{eqnarray*}
which leads us to the desired result.
\end{proof}
%%%%%%%%%%%%%%%%%%%%%%%%%%%%%%%%%%%%
\begin{lemma}\label{lemma33}
If $({\mathcal S},X^\Gamma)$ is weakly almost periodic, then for all $w\in\Gamma$, ${\mathcal T}w$ is finite.
\end{lemma}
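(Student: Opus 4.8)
The plan is to prove the contrapositive, using the characterization of weak almost periodicity recorded in Note~\ref{note22}: since $({\mathcal S},X^\Gamma)$ is weakly almost periodic if and only if $E({\mathcal S},X^\Gamma)\subseteq\{\sigma_\varphi:\varphi\in\Gamma^\Gamma\}$, it suffices to show that whenever some orbit ${\mathcal T}v$ is infinite, the enveloping semigroup contains an element which is \emph{not} a generalized shift. Thus I would assume that some $v\in\Gamma$ has ${\mathcal T}v$ infinite and manufacture such an element by a limiting argument.

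First I would choose a sequence $(\varphi_n)_{n\geq1}$ in ${\mathcal T}$ for which the values $u_n:=\varphi_n(v)$ are pairwise distinct; this is possible precisely because ${\mathcal T}v=\{\varphi(v):\varphi\in{\mathcal T}\}$ is infinite. Each $\sigma_{\varphi_n}$ lies in $\{\pi^s:s\in{\mathcal S}\}$, whose closure in $(X^\Gamma)^{X^\Gamma}$ is $E({\mathcal S},X^\Gamma)$. By Tychonoff, $(X^\Gamma)^{X^\Gamma}$ is compact, so the sequence $(\sigma_{\varphi_n})_n$, regarded as a net, admits a subnet $(\sigma_{\varphi_{n(\lambda)}})_{\lambda\in\Lambda}$ converging pointwise to some $g\in E({\mathcal S},X^\Gamma)$. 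The claim is that this $g$ cannot be of the form $\sigma_\psi$.

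To establish the claim I would argue by contradiction. Suppose $g=\sigma_\psi$ for some $\psi:\Gamma\to\Gamma$, and put $u^*=\psi(v)$, so that $(g(x))_v=x_{u^*}$ for every $x\in X^\Gamma$. I then test $g$ at the single point $x^*\in X^\Gamma$ defined by $x^*_{u^*}=p$ and $x^*_w=q$ for $w\neq u^*$, where $p\neq q$ are fixed in $X$. On one hand $(g(x^*))_v=x^*_{u^*}=p$. On the other hand $(\sigma_{\varphi_{n(\lambda)}}(x^*))_v=x^*_{\varphi_{n(\lambda)}(v)}=x^*_{u_{n(\lambda)}}$; since the $u_n$ are pairwise distinct, $u_n=u^*$ for at most one index, so cofinally $u_{n(\lambda)}\neq u^*$ and this coordinate equals $q$. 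Passing to the limit in the discrete coordinate then gives $(g(x^*))_v=q$, whence $p=q$, a contradiction. Hence $g\notin\{\sigma_\psi:\psi\in\Gamma^\Gamma\}$, and by Note~\ref{note22} the system is not weakly almost periodic, which is exactly the contrapositive of the lemma.

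The step I expect to require the most care is the net-theoretic bookkeeping. I must check that ``cofinally $u_{n(\lambda)}\neq u^*$'' genuinely holds for a subnet of a \emph{sequence}: this is valid because $n(\lambda)$ is cofinal in ${\mathbb N}$ while at most one $u_n$ equals $u^*$, so eventually $n(\lambda)$ exceeds that exceptional index. I must also use that pointwise convergence in $X^\Gamma$ is coordinate-wise convergence into the discrete space $X$, so that a net which is eventually constant in the $v$-coordinate converges to that constant. Once these two points are pinned down, the computation above closes the argument with no further effort.
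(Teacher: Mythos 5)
Your proof is correct, and it departs from the paper's in a genuine way. The shared skeleton is the same: assuming some ${\mathcal T}v$ is infinite, both you and the paper choose $\varphi_n\in{\mathcal T}$ with the values $\varphi_n(v)$ pairwise distinct, pass to a subnet of $(\sigma_{\varphi_n})_n$ converging to some limit $g$ in the compact set $E({\mathcal S},X^\Gamma)$, and play that limit against Note~\ref{note22}. The difference lies in how the contradiction is extracted. The paper argues by contradiction from weak almost periodicity: it writes the limit as $\sigma_\psi$, endows $X$ with a cyclic group structure, builds the points $y_n$ and $y$ with $\lim_i(y_1+\cdots+y_i)=y$, and then uses the continuity of the limit \emph{together with} the fact that generalized shifts are group homomorphisms to compare $\lim_i(py_1+\cdots+py_i)$ with $py$ at the coordinate $w_0$, getting $u$ one way and $v$ the other. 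You instead argue contrapositively with a single test point: if $g=\sigma_\psi$, the $v$-coordinate of $g(x^*)$ must equal $x^*_{\psi(v)}=p$ by the shift form, yet it equals $q$ as the eventual value of $x^*_{\varphi_{n(\lambda)}(v)}$ along the subnet, since the evaluation points $\varphi_{n(\lambda)}(v)$ eventually avoid the single coordinate $\psi(v)$. Your route is more elementary --- no group structure, no partial sums --- and it isolates the real combinatorial content: the $v$-coordinate of the limit cannot be evaluation at any fixed coordinate, because the injective sequence $\varphi_n(v)$ escapes every fixed coordinate. What the paper's heavier argument buys is only a different mode of using the hypothesis (bare continuity of the limit plus the automatic homomorphism property), whereas you lean on the structural classification in Note~\ref{note22}; since that note is available and is exactly what the paper itself cites, nothing is lost. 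The two net-theoretic points you flagged --- that a subnet of a sequence is eventually beyond any fixed index, and that convergence in $X^\Gamma$ is coordinatewise convergence into a discrete space --- are precisely the delicate steps, and you handle both correctly, so the argument is complete.
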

%%%%%%%%%%%%%%%%%%%%%%%%%%%%%%%%%%%%
\begin{proof}
Suppose $({\mathcal S},X^\Gamma)$ is weakly almost periodic and consider 
$w_0\in\Gamma$ such that ${\mathcal T}w_0$ is infinite, choose $\beta_1,\beta_2,\ldots\in{\mathcal T}$ 
such that $\{\beta_n(w_0)\}_{n\geq1}$ is a one--to--one sequence. We may suppose $X$ has a finite 
cyclic group structure with identity $u$. Choose $v\in X\setminus\{u\}$. For $i\geq1$ 
choose $y_i=(y^i_w)_{w\in\Gamma}\in X^\Gamma$ and $y=(y_w)_{w\in\Gamma}\in X^\Gamma$ such that:
\[y_w^i:=\left\{\begin{array}{lc} v & w=\beta_i(w_0)\:, \\ u & {\rm otherwise}\:, \end{array}\right.\SP
	{\rm and}\SP y_w:=\left\{\begin{array}{lc} v & w=\beta_1(w_0),\beta_2(w_0),\ldots\:, \\ u & {\rm otherwise}\:. \end{array}\right.\]
Then $\mathop{\lim}\limits_{i\to\infty}(y_1+y_2+\cdots+y_i)=y$. The sequence 
$\{\sigma_{\beta_i}\}_{i\geq1}$ has a convergent subnet $\{\sigma_{\beta_{i_\lambda}}\}_{\lambda\in\Lambda}$ 
in $ E({\mathcal S},X^\Gamma)$ to $p\in E({\mathcal S},X^\Gamma)$. Since $({\mathcal S},X^\Gamma)$ is 
weakly almost periodic, by Note~\ref{note22}, there exists $\psi:\Gamma\to\Gamma$ with $p=\sigma_\psi$. 
Let $(z_w)_{w\in\Gamma}:=py=(y_{\psi(w)})_{w\in\Gamma}$ and 
$(z^n_w)_{w\in\Gamma}:=py_n=(y^n_{\psi(w)})_{w\in\Gamma}$ ($n\geq1$). Using the continuity of $p$ 
and 
\linebreak
$\mathop{\lim}\limits_{i\to\infty}(y_1+y_2+\cdots+y_i)=y$ we have:
\[\mathop{\lim}\limits_{i\to\infty}(py_1+\cdots+py_i)= \mathop{\lim}\limits_{i\to\infty}p(y_1+\cdots+y_i)=py\:,\]
which leads to:
\[\mathop{\lim}\limits_{i\to\infty}
(z^1_{w_0}+ z^2_{w_0}+\cdots+ z^i_{w_0})= 
z_{w_0}\:.\tag{*}\]
For $n\geq1$ and all $i\geq n$ we have $y^n_{\beta_i(w_0)}=u$, so 
$\mathop{\lim}\limits_{i\to\infty} y^n_{\beta_i(w_0)}=u$, which leads to 
$z^n_{w_0}=\mathop{\lim}\limits_{\lambda\in\Lambda} y^n_{\beta_{i_\lambda}(w_0)}=u$. Hence: 
\[\forall n\geq1\SP(z^1_{w_0}+ z^2_{w_0}+\cdots+ z^n_{w_0}=u)\:\tag{**}\]
Using (*) and (**) we have $z_{w_0}=u$. On the other hand for all $i\geq1$ we have 
$y_{\beta_i(w_0)}=v$, therefore $v=\mathop{\lim}\limits_{i\to\infty} y_{\beta_i(w_0)}= \mathop{\lim}\limits_{\lambda\in\Lambda} y_{\beta_{i_\lambda}(w_0)}=z_{w_0}$, 
which is a contradiction by $u\neq v$.
\end{proof}
%%%%%%%%%%%%%%%%%%%%%%%%%%%%%%%%%%%%
\begin{lemma}\label{lemma34}
If $({\mathcal S},X^\Gamma)$  is not equicontinuous, if and only if it is sensitive.
\end{lemma}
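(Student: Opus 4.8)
The statement asserts the equivalence ``not equicontinuous $\iff$ sensitive.'' One direction is already free: the preliminaries record that sensitivity forces the system to have a non-equicontinuous point, hence sensitive $\Rightarrow$ not equicontinuous. So the entire content lies in the converse, and the plan is to prove that if $({\mathcal S},X^\Gamma)$ is not equicontinuous, then it is sensitive. The first move is to extract a usable hypothesis from non-equicontinuity. By the contrapositive of Lemma~\ref{lemma32}, a non-equicontinuous $({\mathcal S},X^\Gamma)$ must fail the condition ``${\mathcal T}w$ finite for all $w$,'' so I would fix some $w_0\in\Gamma$ with ${\mathcal T}w_0$ \emph{infinite}. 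This single coordinate will drive the whole argument.

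The heart of the proof is then a direct construction exhibiting sensitivity with the fixed entourage $\alpha:=\alpha_{\{w_0\}}\in{\mathcal F}$, which must work uniformly for every point and neighbourhood. Given an arbitrary $x=(x_w)_{w\in\Gamma}\in X^\Gamma$ and an open neighbourhood $U$ of $x$, I would choose a finite $H\subseteq\Gamma$ with $\alpha_H[x]\subseteq U$, and enlarge $H$ so that $w_0\in H$. Since ${\mathcal T}w_0=\{\varphi(w_0):\varphi\in{\mathcal T}\}$ is infinite while $H$ is finite, there is some $\varphi\in{\mathcal T}$ with $\varphi(w_0)\notin H$. Using $|X|\geq2$, define $y=(y_w)_{w\in\Gamma}$ to agree with $x$ in every coordinate except $\varphi(w_0)$, where I pick $y_{\varphi(w_0)}\neq x_{\varphi(w_0)}$. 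Because $x$ and $y$ differ only at $\varphi(w_0)\notin H$, we get $y\in\alpha_H[x]\subseteq U$.

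It then remains to check that $\sigma_\varphi\in{\mathcal S}$ separates $x$ and $y$ beyond $\alpha$. Here $(\sigma_\varphi(x))_{w_0}=x_{\varphi(w_0)}$ and $(\sigma_\varphi(y))_{w_0}=y_{\varphi(w_0)}$ differ by construction, so $(\sigma_\varphi(x),\sigma_\varphi(y))\notin\alpha_{\{w_0\}}=\alpha$; taking $s:=\sigma_\varphi$ completes the verification of sensitivity. The step I expect to require the most care is the \emph{uniformity} of the choice of $\alpha$: sensitivity demands one entourage that works for all $x$ and $U$ simultaneously, and the argument succeeds precisely because $\alpha_{\{w_0\}}$ depends only on the globally chosen coordinate $w_0$, not on $x$; the infiniteness of ${\mathcal T}w_0$ is exactly what lets the perturbation coordinate $\varphi(w_0)$ be pushed outside any prescribed finite $H$ while keeping $y$ inside $U$.
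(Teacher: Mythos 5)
Your proof is correct and follows essentially the same route as the paper: both obtain, via the contrapositive of Lemma~\ref{lemma32}, a coordinate $w_0$ with ${\mathcal T}w_0$ infinite, use $\alpha_{\{w_0\}}$ as the uniform sensitivity entourage, and perturb $x$ at a coordinate $\varphi(w_0)$ lying outside the finite set that determines the given neighbourhood. The only cosmetic difference is that the paper first picks the target coordinate $\beta\in{\mathcal T}w_0$ off the finite set and then a $\varphi$ with $\varphi(w_0)=\beta$, while you pick $\varphi$ directly; these are the same argument.
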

%%%%%%%%%%%%%%%%%%%%%%%%%%%%%%%%%%%%
\begin{proof}
Suppose $({\mathcal S},X^\Gamma)$  is not equicontinuous, by Lemma~\ref{lemma32} there exists 
$v\in\Gamma$ such that ${\mathcal T}v$ is infinite. Consider $x=(x_w)_{w\in\Gamma}\in X^\Gamma$ 
and open neighbourhood $U$ of $x$, there exists $\beta_1,\ldots,\beta_n\in\Gamma$ such that 
\[\{(y_w)_{w\in\Gamma}\in X^\Gamma:y_{\beta_1}=x_{\beta_1},\ldots, y_{\beta_n}=x_{\beta_n}\}\subseteq U\:.\] 
Choose $\beta\in{\mathcal T}v\setminus\{\beta_1,\ldots,\beta_n\}$ and $p\in X\setminus\{x_\beta\}$ also let:
\[y_w:=\left\{\begin{array}{lc} x_w & w\neq\beta\:, \\ p & w=\beta\: , \end{array}\right.\] 
then $y:= (y_w)_{w\in\Gamma}\in U$. There exists $\varphi\in{\mathcal T}$ with $\beta=\varphi(v)$, 
so for $(u_w)_{w\in\Gamma}:=\sigma_\varphi(x)$ 
and $(t_w)_{w\in\Gamma}:=\sigma_\varphi(y)$  we have $u_v=x_\beta\neq p=y_\beta=t_v$, thus 
\[(\sigma_\varphi(x),\sigma_\varphi(y))=( (u_w)_{w\in\Gamma},(t_w)_{w\in\Gamma}\notin\alpha_{\{v\}}\:.\]
Hence for all $x\in X^\Gamma$ and open neighbourhood $U$ of $x$ there exists $y\in U$ 
and $\sigma_\varphi\in{\mathcal S}$ with $(\sigma_\varphi(x),\sigma_\varphi(y)) \notin\alpha_{\{v\}}$ which completes the proof.
\end{proof}
%%%%%%%%%%%%%%%%%%%%%%%%%%%%%%%%%%%%
\begin{theorem}[main]\label{theorem35}
In the transformation semigroup $({\mathcal S},X^\Gamma)$, the following statements are equivalent:
\begin{itemize}
\item[1.] the transformation semigroup $({\mathcal S},X^\Gamma)$ is equicontinuous;
\item[2.] transformation semigroup $({\mathcal S},X^\Gamma)$ has at least an equicontinuous point;
\item[3.] the transformation semigroup $({\mathcal S},X^\Gamma)$ is not sensitive;
\item[4.] all of the elements of $E({\mathcal S},X^\Gamma)$ are continuous maps on $X^\Gamma$;
\item[5.] for all $w\in\Gamma$, $\{\varphi(w):\sigma_\varphi\in{\mathcal S}\}$ is finite;
\item[6.] for all $w\in\Gamma$, $\{\varphi(w):\sigma_\varphi\in E({\mathcal S},X^\Gamma)\}$ is finite;
\item[7.] $E({\mathcal S},X^\Gamma)\subseteq\{\sigma_\varphi:\varphi\in\Gamma^\Gamma\}$.
\end{itemize}
\end{theorem}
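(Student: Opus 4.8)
The plan is to prove the seven conditions equivalent by routing almost everything through the finiteness statement (5), namely that each $\mathcal{T}w=\{\varphi(w):\sigma_\varphi\in\mathcal{S}\}$ is finite, since Lemmas~\ref{lemma31}, \ref{lemma32} and \ref{lemma33} are each phrased exactly in these terms. For the block $(1),(2),(3),(5)$ I would argue as follows. The implication $(1)\Rightarrow(2)$ is immediate: if the system is equicontinuous then every point is an equicontinuous point (given $\alpha$, the entourage $\beta$ with $\mathcal{S}\beta\subseteq\alpha$ makes any open $U\subseteq\beta[x]$ work), and $X^\Gamma\neq\emptyset$. Then $(2)\Rightarrow(5)$ is Lemma~\ref{lemma31} and $(5)\Rightarrow(1)$ is Lemma~\ref{lemma32}, closing the loop among $(1),(2),(5)$. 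Finally Lemma~\ref{lemma34} states that $(\mathcal{S},X^\Gamma)$ is not equicontinuous if and only if it is sensitive; taking contrapositives yields $(1)\Leftrightarrow(3)$.

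Next I would attach the enveloping-semigroup conditions $(4)$ and $(7)$. Condition $(4)$ is precisely the definition of weak almost periodicity, so Note~\ref{note22} delivers $(4)\Leftrightarrow(7)$ at once. To weld this block to the previous one I use two facts already available: equicontinuity of $(\mathcal{S},X^\Gamma)$ forces $E(\mathcal{S},X^\Gamma)$ to be an equicontinuous family, hence all of its elements are continuous maps, giving $(1)\Rightarrow(4)$; and Lemma~\ref{lemma33} gives $(4)\Rightarrow(5)$. Combined with $(5)\Rightarrow(1)$, this shows that $(1),(4),(5),(7)$ are mutually equivalent.

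It remains to incorporate $(6)$. The easy direction is $(6)\Rightarrow(5)$: every $\sigma_\varphi\in\mathcal{S}$ lies in $E(\mathcal{S},X^\Gamma)$, so $\mathcal{T}w=\{\varphi(w):\sigma_\varphi\in\mathcal{S}\}\subseteq\{\varphi(w):\sigma_\varphi\in E(\mathcal{S},X^\Gamma)\}$, and finiteness of the larger set forces finiteness of $\mathcal{T}w$. The main obstacle is the reverse implication $(5)\Rightarrow(6)$, which is the one genuinely new computation. Here I would take any $\sigma_\psi\in E(\mathcal{S},X^\Gamma)$, written as a limit of a net $\{\sigma_{\varphi_\lambda}\}$ with $\varphi_\lambda\in\mathcal{T}$. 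Fixing $w\in\Gamma$, pointwise convergence gives $x_{\varphi_\lambda(w)}\to x_{\psi(w)}$ for every $x=(x_\alpha)_{\alpha\in\Gamma}\in X^\Gamma$. Since $\varphi_\lambda(w)\in\mathcal{T}w$ and $\mathcal{T}w$ is finite by $(5)$, the net $\{\varphi_\lambda(w)\}$ assumes some value $\gamma\in\mathcal{T}w$ cofinally; passing to the corresponding subnet (along which the limit is unchanged) makes $x_{\varphi_\lambda(w)}=x_\gamma$ eventually, whence $x_{\psi(w)}=x_\gamma$ for every $x$. Choosing $x$ that separates the coordinates $\gamma$ and $\psi(w)$ then forces $\psi(w)=\gamma\in\mathcal{T}w$. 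Thus $\{\varphi(w):\sigma_\varphi\in E(\mathcal{S},X^\Gamma)\}\subseteq\mathcal{T}w$ is finite, proving $(6)$. With $(5)\Leftrightarrow(6)$ in hand every statement is linked to $(5)$, and the theorem follows.
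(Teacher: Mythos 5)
Your proposal is correct, and for the implications among (1), (2), (3), (4), (5), (7) it follows the paper's route exactly: the same chain $(1)\Rightarrow(2)\Rightarrow(5)\Rightarrow(1)$ via Lemmas~\ref{lemma31} and~\ref{lemma32}, the same use of Lemma~\ref{lemma34} for (3), of Lemma~\ref{lemma33} for $(4)\Rightarrow(5)$, and of Note~\ref{note22} for $(4)\Leftrightarrow(7)$. Where you genuinely diverge is in how (6) is attached. The paper derives (6) from (1) and (7) by a bootstrap: since $(\mathcal{S},X^\Gamma)$ is equicontinuous, $E(\mathcal{S},X^\Gamma)$ is an equicontinuous family, and by (7) it is itself a semigroup of generalized shifts, so one may regard $(E(\mathcal{S},X^\Gamma),X^\Gamma)$ as a new generalized shift transformation semigroup and apply the already-established equivalence $(1)\Leftrightarrow(5)$ to it, which is exactly (6). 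You instead prove $(5)\Rightarrow(6)$ by a direct computation: writing $\sigma_\psi\in E(\mathcal{S},X^\Gamma)$ as a pointwise limit of a net $\{\sigma_{\varphi_\lambda}\}$, using finiteness of $\mathcal{T}w$ to extract a cofinal subnet along which $\varphi_\lambda(w)$ is a constant $\gamma\in\mathcal{T}w$, and separating coordinates (possible since $|X|\geq 2$) to conclude $\psi(w)=\gamma$; your extraction of a cofinally assumed value from a net in a finite set is sound. The trade-off: the paper's argument is shorter but implicitly requires verifying that $(E(\mathcal{S},X^\Gamma),X^\Gamma)$ is a legitimate transformation semigroup of the kind covered by the lemmas (discrete phase semigroup of generalized shifts containing the identity, acting continuously), whereas your argument is self-contained, needs neither (1) nor (7) as hypotheses, and yields the sharper inclusion $\{\varphi(w):\sigma_\varphi\in E(\mathcal{S},X^\Gamma)\}\subseteq\mathcal{T}w$, so that (6) is tied to (5) by a clean two-way equivalence independent of the rest of the theorem.
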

%%%%%%%%%%%%%%%%%%%%%%%%%%%%%%%%%%%%
\begin{proof}
(1,2,3,4,5) are equivalent: First note that if $(S,Z)$ is an equicontinuous transformation semigroup, then all 
of its points are equicontinuous points, it is not sensitive, and all of the elements of $E(S,Z)$ are continuous, 
so (1) implies (2), (3) and (4). By Lemmas~\ref{lemma31} and~\ref{lemma33}, (2) and (4) imply (5). By Lemma~\ref{lemma32}, (5) implies 
(1). By Lemma~\ref{lemma34}, (3) implies (1).
\\
(4) and (7) are equivalent by Note~\ref{note22}.
\\
It’s evident that (6) implies (5).
\\
Finally, note that in equicontinuous transformation semigroup $(S,Z)$, one may 
consider the  equicontinuous transformation semigroup $(E(S,Z),Z)$, now using the 
equivalency of (1) and (5), two items (1) and (7) imply (6).
\end{proof}
%%%%%%%%%%%%%%%%%%%%%%%%%%%%%%%%%%%%
\section{When is $({\mathcal S},X^\Gamma)$ distal?}
%%%%%%%%%%%%%%%%%%%%%%%%%%%%%%%%%%%%
\noindent In this section we prove $({\mathcal S},X^\Gamma)$ is distal if and only if it is equicontinuous, 
and $\mathcal T$  is a collection of permutations on $\Gamma$. Let’s recall that $\varphi:\Gamma\to\Gamma$ 
is injective (resp. surjective) if and only if $\sigma_\varphi:X^\Gamma\to X^\Gamma$ 
is injective (resp. surjective) \cite{2008}.
%%%%%%%%%%%%%%%%%%%%%%%%%%%%%%%%%%%%
\begin{lemma}\label{lemma41}
If $({\mathcal S},X^\Gamma)$ is distal, then for all $w\in\Gamma$ the set 
$\mathcal{T}^{-1}w(=\{t\in\Gamma:\exists\varphi\in\Gamma\:(\varphi(t)=w)\})$ 
is finite and for all $\varphi\in{\mathcal T}$ the map $\varphi:\Gamma\to\Gamma$ is bijective.
\end{lemma}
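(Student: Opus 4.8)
The plan is to dispatch the bijectivity claim first and then use it to prove finiteness of the fibres by contradiction. For bijectivity: since $({\mathcal S},X^\Gamma)$ is distal, every element of $E({\mathcal S},X^\Gamma)$ --- and in particular every $\sigma_\varphi\in{\mathcal S}$ --- is a bijection of $X^\Gamma$. By the equivalence recalled just before the statement ($\varphi$ is injective, resp.\ surjective, iff $\sigma_\varphi$ is injective, resp.\ surjective), each $\varphi\in{\mathcal T}$ is both injective and surjective, i.e.\ bijective. This settles the second assertion with essentially no work.

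For the finiteness of ${\mathcal T}^{-1}w$, I would argue by contradiction using the net characterization of distality. Suppose some $w_0\in\Gamma$ has ${\mathcal T}^{-1}w_0$ infinite. Then I can choose a one-to-one sequence $t_1,t_2,\ldots$ in ${\mathcal T}^{-1}w_0$ and, for each $n$, some $\varphi_n\in{\mathcal T}$ with $\varphi_n(t_n)=w_0$. Fix two distinct elements $a\neq b$ of $X$ and let $x,y\in X^\Gamma$ be the points that agree off $w_0$ and satisfy $x_{w_0}=a$, $y_{w_0}=b$; so $x\neq y$.

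The key observation is that $\sigma_{\varphi_n}(x)$ and $\sigma_{\varphi_n}(y)$ differ at the single coordinate $t_n$ only. Indeed the $\alpha$-coordinate of $\sigma_{\varphi_n}(x)$ is $x_{\varphi_n(\alpha)}$, so the two images disagree at $\alpha$ exactly when $\varphi_n(\alpha)=w_0$; since the already established bijectivity of $\varphi_n$ gives $\varphi_n^{-1}(w_0)=\{t_n\}$, the disagreement locus is precisely $\{t_n\}$. Now pass to a subnet $\{(\sigma_{\varphi_{n_\lambda}}(x),\sigma_{\varphi_{n_\lambda}}(y))\}_{\lambda\in\Lambda}$ converging in the compact space $X^\Gamma\times X^\Gamma$, say to $(z,z')$. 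At any fixed coordinate $\alpha$ the two components disagree only when $t_n=\alpha$, which happens for at most one index $n$ because the $t_n$ are distinct; hence the $\alpha$-coordinates coincide eventually along the subnet, forcing $z_\alpha=z'_\alpha$. As $\alpha$ is arbitrary, $z=z'$, so $\mathop{\lim}\limits_{\lambda\in\Lambda}\sigma_{\varphi_{n_\lambda}}x=z=\mathop{\lim}\limits_{\lambda\in\Lambda}\sigma_{\varphi_{n_\lambda}}y$ while $x\neq y$, contradicting distality.

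The step I expect to require the most care is the coordinatewise limit argument: one must be sure it is the distinctness of the $t_n$, together with bijectivity forcing the disagreement set to be the single moving coordinate $t_n$, that drives each fixed coordinate to stabilize along the subnet. This is precisely why the bijectivity half must be established first and then fed into the finiteness argument.
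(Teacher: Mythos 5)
Your proposal is correct and follows essentially the same route as the paper: bijectivity of each $\varphi\in{\mathcal T}$ comes from distality plus the cited equivalence between injectivity/surjectivity of $\varphi$ and of $\sigma_\varphi$, and finiteness of ${\mathcal T}^{-1}w_0$ is refuted via a one-to-one sequence $t_n$ in the fibre together with maps $\varphi_n(t_n)=w_0$, with injectivity of $\varphi_n$ confining the disagreement of the two image points to the single moving coordinate $t_n$, contradicting the net characterization of distality. The only cosmetic difference is that the paper takes one of the two points to be the constant point $(q)_{w\in\Gamma}$, so the full sequence of images visibly converges to that constant and no compactness/subnet extraction is needed, whereas you work with an arbitrary pair differing at $w_0$ and pass to a convergent subnet in $X^\Gamma\times X^\Gamma$; both arguments are valid.
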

%%%%%%%%%%%%%%%%%%%%%%%%%%%%%%%%%%%%
\begin{proof}
If $({\mathcal S},X^\Gamma)$ is distal, then for all $\varphi\in{\mathcal T}$  the map 
$\sigma_\varphi:X^\Gamma\to X^\Gamma$ is bijective, so $\varphi:\Gamma\to\Gamma$  
is bijective too. Consider $v\in\Gamma$. If ${\mathcal T}^{-1}v$ is infinite, then choose the 
one to one sequence $\{v_n\}_{n\geq1}$ in ${\mathcal T}^{-1}v$ and 
sequence $\{\varphi_n\}_{n\geq1}$ in $\mathcal T$ such that for all $n\geq1$ we 
have $\varphi_n(v_n)=v$ (thus $\{\varphi_n\}_{n\geq1}$ is a one-to-one sequence too). 
Choose distinct $p,q\in X$ and let:
\[z_w:=\left\{\begin{array}{lc} p & w=v\:,\\ q & w\in\Gamma\setminus\{v\}\:.\end{array}\right.\] 
It's evident that 
$\mathop{\lim}\limits_{n\to\infty}\sigma_{\varphi_n}((q)_{w\in\Gamma})=\mathop{\lim}\limits_{n\to\infty}(q)_{w\in\Gamma}=(q)_{w\in\Gamma}$. 
We prove 
\linebreak
$\mathop{\lim}\limits_{n\to\infty}\sigma_{\varphi_n}((z_w)_{w\in\Gamma})=(q)_{w\in\Gamma}$. 
So we should prove $\mathop{\lim}\limits_{n\to\infty}z_{\varphi_n(w)}=q$ for all $w\in\Gamma$. 
Consider $w\in\Gamma$ there exists $N\geq1$ such that $w\neq v_n$ for all $n\geq N$, 
thus $\varphi_n(w)\neq\varphi_n(v_n)$ which leads to $\varphi_n(w)\neq v$ and $z_{\varphi_n(w)}=q$ 
for all $n\geq N$, hence $\mathop{\lim}\limits_{n\to\infty}z_{\varphi_n(w)}=q$ which leads to the desired result. 
Since 
$\mathop{\lim}\limits_{n\to\infty}\sigma_{\varphi_n}((z_w)_{w\in\Gamma})=\mathop{\lim}\limits_{n\to\infty}\sigma_{\varphi_n}((q)_{w\in\Gamma})$ 
and $(z_w)_{w\in\Gamma}\neq (q)_{w\in\Gamma}$, $({\mathcal S},X^\Gamma)$ is not distal, 
which is in contradiction with our assumption and completes the proof.
\end{proof}
%%%%%%%%%%%%%%%%%%%%%%%%%%%%%%%%%%%%
\begin{lemma}\label{lemma42}
Suppose $\mathcal T$ is a semigroup of permutations on $\Gamma$ and $w\in\Gamma$. The following statements are equivalents:
\begin{itemize}
\item[1.] ${\mathcal T}w$ is finite;
\item[2.] ${\mathcal T}^{-1}w$  is finite;
\item[3.] ${\mathcal T}w={\mathcal T}^{-1}w $  is finite.
\end{itemize}
\end{lemma}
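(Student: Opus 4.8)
The implications $3\Rightarrow 1$ and $3\Rightarrow 2$ are immediate, so the work is to establish $1\Rightarrow 3$ and $2\Rightarrow 3$; together with the trivial implications these give $1\Leftrightarrow 3\Leftrightarrow 2$. The plan for $1\Rightarrow 3$ is to reduce everything to a finite permutation group. Assume $\mathcal{T}w$ is finite and set $O:=\mathcal{T}w$. Since $\mathrm{id}_\Gamma\in\mathcal{T}$ we have $w\in O$, and $O$ is $\mathcal{T}$-invariant: for $\varphi\in\mathcal{T}$ and $x=\psi(w)\in O$ one has $\varphi(x)=(\varphi\circ\psi)(w)\in\mathcal{T}w=O$, because $\mathcal{T}$ is closed under composition. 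As each $\varphi\in\mathcal{T}$ is injective on $\Gamma$ and $O$ is finite, the restriction $\varphi\restriction_O$ is a permutation of $O$, so $G:=\{\varphi\restriction_O:\varphi\in\mathcal{T}\}$ is a subsemigroup of the finite group $\mathrm{Sym}(O)$ containing $\mathrm{id}_O$.

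The key step is the elementary fact that a subsemigroup of a finite group is a subgroup: for $g\in G$ the powers cannot all be distinct, so $g^{n}=g^{m}$ for some $n>m\ge 1$, whence $g^{\,n-m}=\mathrm{id}_O$ and $g^{-1}=g^{\,n-m-1}\in G$. Thus $G$ is a group, and for every $\varphi\in\mathcal{T}$ there is some $\psi\in\mathcal{T}$ with $\psi\restriction_O=(\varphi\restriction_O)^{-1}$. I would then identify both $\mathcal{T}w$ and $\mathcal{T}^{-1}w$ with the single $G$-orbit $Gw$. Indeed $\mathcal{T}w=\{g(w):g\in G\}=Gw$ directly; and if $\varphi(t)=w$ with $\varphi\in\mathcal{T}$, then since $\varphi$ is injective on $\Gamma$ and $\varphi\restriction_O$ bijects $O$ with $w\in O$, the unique preimage $t$ must lie in $O$ and equals $(\varphi\restriction_O)^{-1}(w)\in Gw$, giving $\mathcal{T}^{-1}w\subseteq Gw$; conversely each $g(w)\in Gw$ is sent to $w$ by the element $\psi\in\mathcal{T}$ realizing $g^{-1}$ on $O$, so $Gw\subseteq\mathcal{T}^{-1}w$. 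Hence $\mathcal{T}w=\mathcal{T}^{-1}w=Gw$ is finite, which is statement $3$.

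For $2\Rightarrow 3$ I would invoke symmetry rather than repeat the argument. The family $\mathcal{T}^{-1}:=\{\varphi^{-1}:\varphi\in\mathcal{T}\}$ is again a semigroup of permutations of $\Gamma$ containing $\mathrm{id}_\Gamma$ (from $(\varphi\circ\psi)^{-1}=\psi^{-1}\circ\varphi^{-1}$), and one checks directly that its $w$-orbit is $(\mathcal{T}^{-1})w=\mathcal{T}^{-1}w$ while $(\mathcal{T}^{-1})^{-1}w=\mathcal{T}w$. Thus statement $2$ says precisely that the $w$-orbit of the semigroup $\mathcal{T}^{-1}$ is finite, and applying the already proved implication $1\Rightarrow 3$ to $\mathcal{T}^{-1}$ yields $\mathcal{T}^{-1}w=\mathcal{T}w$ finite, which is again statement $3$.

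I expect the main obstacle to be the bookkeeping in the second paragraph: passing from the abstract inverse $(\varphi\restriction_O)^{-1}$ in $\mathrm{Sym}(O)$ to a genuine element of $\mathcal{T}$, and correctly matching the preimage set $\mathcal{T}^{-1}w$ (defined through $\Gamma$-preimages) with the group orbit $Gw$. This rests on two points that must be stated with care: that a finite subsemigroup of a group is closed under inverses, and that a permutation of $\Gamma$ has a unique preimage of $w$, so the $\Gamma$-preimage coincides with the $O$-preimage once invariance of $O$ is known. Everything else is routine.
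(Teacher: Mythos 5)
Your proof is correct, and it reaches the lemma by a genuinely (if modestly) different route than the paper's. Both arguments run on the same engine --- finiteness forces powers of a permutation to repeat, so inverses are realized as positive powers --- and both use the identical symmetry device (pass to the semigroup $\mathcal{T}^{-1}=\{\varphi^{-1}:\varphi\in\mathcal{T}\}$ and reapply the first implication) for the converse direction. The difference is where the pigeonhole is applied and how the implications are arranged. The paper argues pointwise: for each $\varphi\in\mathcal{T}$ the set $\{\varphi^{n}(w):n\geq 1\}\subseteq\mathcal{T}w$ is finite, so $\varphi^{n}(w)=\varphi^{m}(w)$ for some $n>m\geq 1$, hence $w$ is periodic under $\varphi$ and $\varphi^{-1}(w)=\varphi^{2(n-m)-1}(w)\in\mathcal{T}w$; this yields $\mathcal{T}^{-1}w\subseteq\mathcal{T}w$ with no invariance or restriction bookkeeping at all, and the lemma is assembled as $1\Rightarrow 2$, then $2\Rightarrow 1$ by symmetry, then $3$ from the two inclusions. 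You instead globalize: you verify that $O=\mathcal{T}w$ is $\mathcal{T}$-invariant, restrict to get a subsemigroup $G$ of the finite group $\mathrm{Sym}(O)$, invoke the fact that a finite subsemigroup of a group is a subgroup, and identify $\mathcal{T}w=\mathcal{T}^{-1}w=Gw$ in one stroke, giving $1\Rightarrow 3$ and (by symmetry) $2\Rightarrow 3$ directly. Your route costs more verification --- the invariance of $O$ and the matching of $\Gamma$-preimages with $O$-preimages, both of which you handle correctly via injectivity --- but it buys a cleaner conceptual picture: statement 3 becomes the transparent assertion that both sets are a single orbit of a finite permutation group. One small contrast worth noting: your argument leans on $\mathrm{id}_\Gamma\in\mathcal{T}$ (to get $w\in O$, and to cover the degenerate case $g^{-1}=g^{0}$ in the subgroup step), which is legitimate here because Convention 2.1 puts the identity in $\mathcal{T}$; the paper's choice of exponent $2(n-m)-1\geq 1$ sidesteps any appeal to the identity.
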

%%%%%%%%%%%%%%%%%%%%%%%%%%%%%%%%%%%%
\begin{proof}
(1) $\Rightarrow$ (2): Suppose ${\mathcal T}w$ is finite and consider $\varphi\in{\mathcal T}$, since 
$\{\varphi^n(w):n\geq1\}\subseteq{\mathcal T}w$, the set $\{\varphi^n(w):n\geq1\}$ is finite too, so 
there exists $n>m$ with $\varphi^n(w)=\varphi^m(w)$, hence $\varphi^{n-m}(w)=w$ and $w$ is a 
periodic point of $\varphi$, however $\varphi^{2(n-m)-1}\in{\mathcal T}$ which leads to 
$\varphi^{-1}(w)=\varphi^{2(n-m)-1}(w)\in{\mathcal T}w$. Thus ${\mathcal T}^{-1}w\subseteq{\mathcal T}w$ 
and ${\mathcal T}^{-1}w$ is finite.
\\
(2) $\Rightarrow$ (1): Suppose ${\mathcal T}^{-1}w$ is finite, then 
$\mathcal{T}^\prime:=\{\varphi^{-1}:\varphi\in{\mathcal T}\}(={\mathcal T}^{-1})$ is a semigroup of 
permutations on $\Gamma$ and ${\mathcal T}^\prime w$ is finite, now using ``(1) $\Rightarrow$  (2)'', 
$\mathcal{T}^{\prime\:{-1}}w=\mathcal{T}w$ is finite.
\\
Using the proof of ``(1) $\Rightarrow$  (2)'', and the above proof it’s clear that ``(1,2) $\Rightarrow$  (3)''.
\end{proof}
%%%%%%%%%%%%%%%%%%%%%%%%%%%%%%%%%%%%
\begin{lemma}\label{lemma43}
If for all $\varphi\in{\mathcal T}$, $\varphi:\Gamma\to\Gamma$ is bijective and for 
all $w\in\Gamma$, $\mathcal{T}w={\mathcal T}^{-1}w$ is finite, then $({\mathcal S},X^\Gamma)$ is distal.
\end{lemma}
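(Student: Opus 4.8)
The plan is to verify distality directly through the net characterisation recorded in the preliminaries: given $x=(x_w)_{w\in\Gamma}$, $y=(y_w)_{w\in\Gamma}$, $z=(z_w)_{w\in\Gamma}$ in $X^\Gamma$ and a net $\{\sigma_{\varphi_\lambda}\}_{\lambda\in\Lambda}$ in $\mathcal S$ with $\mathop{\lim}\limits_{\lambda\in\Lambda}\sigma_{\varphi_\lambda}(x)=z=\mathop{\lim}\limits_{\lambda\in\Lambda}\sigma_{\varphi_\lambda}(y)$, I must deduce $x=y$. Since $X^\Gamma$ carries the product topology and $X$ is discrete, convergence is coordinatewise and each coordinate eventually stabilises; concretely, because the $w$-th coordinate of $\sigma_{\varphi_\lambda}(x)$ equals $x_{\varphi_\lambda(w)}$, for every $w\in\Gamma$ there is $\lambda_w\in\Lambda$ with $x_{\varphi_\lambda(w)}=z_w=y_{\varphi_\lambda(w)}$ for all $\lambda\geq\lambda_w$.

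Next I would fix an arbitrary $v\in\Gamma$ and aim to show $x_v=y_v$. Here the bijectivity hypothesis enters: each $\varphi_\lambda$ is a permutation, so $\varphi_\lambda^{-1}(v)$ is defined, and since $\varphi_\lambda\in\mathcal T$ and $\varphi_\lambda(\varphi_\lambda^{-1}(v))=v$ it lies in $\mathcal T^{-1}v$, which is finite by assumption. A net taking values in a finite set must attain some single value cofinally, so there is $t\in\mathcal T^{-1}v$ with $\varphi_\lambda(t)=v$ for cofinally many $\lambda$. Applying the coordinate-stabilisation from the first step at $w=t$, I may then choose $\lambda\geq\lambda_t$ with $\varphi_\lambda(t)=v$, whence
\[x_v=x_{\varphi_\lambda(t)}=z_t=y_{\varphi_\lambda(t)}=y_v\:.\]
As $v$ was arbitrary this gives $x=y$, which is exactly the net criterion for distality.

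The hard part will be the bookkeeping of the net in the second paragraph: one must establish the elementary fact that a net into a finite set is cofinally constant on some value, and then correctly intersect that cofinal set with the tail $\{\lambda\in\Lambda:\lambda\geq\lambda_t\}$ on which coordinate $t$ has already stabilised, this intersection being nonempty precisely because a cofinal set meets every tail. I expect no genuine difficulty beyond this; notably the finiteness of each $\mathcal Tw$ is not used directly here (only $\mathcal T^{-1}w$ and bijectivity of the members of $\mathcal T$ matter), and the identification of the $w$-th coordinate of $\sigma_{\varphi_\lambda}(x)$ with $x_{\varphi_\lambda(w)}$ is immediate from the definition of a generalized shift.
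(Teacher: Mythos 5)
Your proof is correct, but it runs on a different mechanism from the paper's. The paper also verifies the net criterion and fixes $v\in\Gamma$, but it then restricts the whole system to the finite \emph{forward} orbit ${\mathcal T}v$: since $\mathcal T$ is a semigroup, ${\mathcal T}v$ is invariant under every $\varphi\in{\mathcal T}$, each $\varphi\restriction_{{\mathcal T}v}$ is a bijection of the finite set ${\mathcal T}v$ (injectivity plus finiteness), so the restricted maps $\sigma_{\varphi_\lambda}\restriction_{X^{{\mathcal T}v}}$ form a finite family of permutations of the finite set $X^{{\mathcal T}v}$; convergence of both restricted nets to $(z_w)_{w\in{\mathcal T}v}$ then forces $(x_w)_{w\in{\mathcal T}v}=(y_w)_{w\in{\mathcal T}v}$, and $x_v=y_v$ follows because $v\in{\mathcal T}v$ (here ${\rm id}_\Gamma\in{\mathcal T}$ is used). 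You instead work with the \emph{backward} orbit: pigeonhole the preimages $\varphi_\lambda^{-1}(v)$ in the finite set ${\mathcal T}^{-1}v$ to get a cofinally constant preimage $t$, then intersect that cofinal set with the stabilization tail at coordinate $t$; both of the supporting facts you flag (a net in a finite set takes some value cofinally, and a cofinal set meets every tail) are true and easy. What your route buys: it is more elementary --- no need to check invariance of ${\mathcal T}v$ or that the restrictions are well-defined permutations --- and it actually sharpens the lemma, since you never use injectivity of the $\varphi$'s nor finiteness of the forward orbits ${\mathcal T}w$: surjectivity of each $\varphi\in{\mathcal T}$ (so that $v$ has \emph{some} preimage $t_\lambda\in{\mathcal T}^{-1}v$) together with finiteness of each ${\mathcal T}^{-1}w$ already suffices, so even your parenthetical claim that bijectivity matters can be weakened. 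What the paper's route buys: by packaging the argument as a finite invariant block $X^{{\mathcal T}v}$ acted on by finitely many permutations, it makes the structural picture explicit (the restricted subsystem is a finite, hence distal, permutation action), which is the same mechanism underlying the equicontinuity results of Section~3.
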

%%%%%%%%%%%%%%%%%%%%%%%%%%%%%%%%%%%%
\begin{proof}
Suppose for all $\varphi\in{\mathcal T}$, $\varphi:\Gamma\to\Gamma$ is bijective and for all 
$w\in\Gamma$, ${\mathcal T}w={\mathcal T}^{-1}w$ is finite. Consider the net 
$\{\varphi_\lambda\}_{\lambda\in\Lambda}$ in ${\mathcal T}$ and 
$(x_w)_{w\in\Gamma},(y_w)_{w\in\Gamma},(z_w)_{w\in\Gamma}\in X^\Gamma$ 
with $\mathop{\lim}\limits_{\lambda\in\Lambda}\sigma_{\varphi_\lambda}((x_w)_{w\in\Gamma})=(z_w)_{w\in\Gamma}=\mathop{\lim}\limits_{\lambda\in\Lambda}\sigma_{\varphi_\lambda}((y_w)_{w\in\Gamma})$. 
Choose $v\in\Gamma$, the map $\varphi\restriction_{{\mathcal T}v}:{\mathcal T}v\to{\mathcal T}v$ is 
bijective since it is one to one and ${\mathcal T}v$ is finite. We have $\mathop{\lim}\limits_{\lambda\in\Lambda}\sigma_{\varphi_\lambda}\restriction_{X^{\mathcal{T}v}}((x_w)_{w\in\mathcal{T}v })=(z_w)_{w\in\mathcal{T}v }=\mathop{\lim}\limits_{\lambda\in\Lambda}\sigma_{\varphi_\lambda}\restriction_{X^{\mathcal{T}v}} ((y_w)_{w\in\mathcal{T}v })$, 
note to the fact that $\{\sigma_{\varphi_\lambda}\restriction_{X^{\mathcal{T}v}}:\lambda\in\Lambda\}$ is a 
set of permutations on $X^{{\mathcal T}v}$ and it is finite (since $X^{{\mathcal T}v}$ is finite), leads us to 
$(x_w)_{w\in{\mathcal T}v }=(y_w)_{w\in{\mathcal T}v }$ and in particular $x_v=y_v$. 
Hence $(x_w)_{w\in\Gamma}=(y_w)_{w\in\Gamma}$ and $({\mathcal S},X^\Gamma)$ is distal.
\end{proof}
%%%%%%%%%%%%%%%%%%%%%%%%%%%%%%%%%%%%
\noindent Using Theorem~\ref{theorem35}, Lemmas~\ref{lemma41},~\ref{lemma42} and~\ref{lemma43} we have the following theorem.
%%%%%%%%%%%%%%%%%%%%%%%%%%%%%%%%%%%%
\begin{theorem}\label{theorem45}
In the transformation semigroup $({\mathcal S},X^\Gamma)$, the following statements are equivalent:
\begin{itemize}
\item the transformation semigroup $({\mathcal S},X^\Gamma)$ is equicontinuous and for all $\varphi\in\mathcal{T}$, $\varphi:\Gamma\to\Gamma$ is bijective;
\item the transformation semigroup $({\mathcal S},X^\Gamma)$ is distal;
\item for all $w\in\Gamma$, $\{\varphi(w):\sigma_\varphi\in{\mathcal S}\}$ is finite and for all $\varphi\in\mathcal{T}$, $\varphi:\Gamma\to\Gamma$ is bijective;
\item for all $w\in\Gamma$, $\{v\in\Gamma:\exists\sigma_\varphi\in{\mathcal S}\:(\varphi(v)=w)\}$ is finite and for all $\varphi\in\mathcal{T}$, $\varphi:\Gamma\to\Gamma$ is bijective.
\end{itemize}
\end{theorem}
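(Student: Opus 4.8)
The plan is to assemble the four statements into a short cycle of equivalences, using the distality statement as the pivot that connects to Lemma~\ref{lemma41} (for one direction) and Lemma~\ref{lemma43} (for the other), while Theorem~\ref{theorem35} and Lemma~\ref{lemma42} take care of the purely combinatorial equivalences among the remaining conditions. Since all of the genuine dynamical content has already been isolated in the preceding lemmas, the theorem is essentially a bookkeeping exercise in rewriting each condition in terms of the sets $\mathcal{T}w$ and $\mathcal{T}^{-1}w$.

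First I would observe that the first and third statements differ only in that ``equicontinuous'' is replaced by the clause that $\{\varphi(w):\sigma_\varphi\in\mathcal{S}\}=\mathcal{T}w$ is finite for every $w$, both retaining the requirement that every $\varphi\in\mathcal{T}$ be bijective. By the equivalence of items (1) and (5) in Theorem~\ref{theorem35}, equicontinuity of $(\mathcal{S},X^\Gamma)$ is exactly the finiteness of all the sets $\mathcal{T}w$, so the first and third statements are literally the same assertion, and their equivalence is immediate.

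Next I would pass between the third and fourth statements. Both carry the hypothesis that every $\varphi\in\mathcal{T}$ is a bijection; under this hypothesis $\mathcal{T}$ is a semigroup of permutations of $\Gamma$, being a semigroup by Convention~\ref{convention21} with each member now a permutation. Hence Lemma~\ref{lemma42} applies and, for each fixed $w$, shows $\mathcal{T}w$ is finite if and only if $\mathcal{T}^{-1}w$ is finite. Since the third statement asserts finiteness of every $\mathcal{T}w$ and the fourth asserts finiteness of every $\mathcal{T}^{-1}w$, this yields their equivalence. It then remains to tie distality into the chain: for one direction I would invoke Lemma~\ref{lemma41}, which says that if $(\mathcal{S},X^\Gamma)$ is distal then every $\varphi\in\mathcal{T}$ is bijective and every $\mathcal{T}^{-1}w$ is finite, i.e. precisely the fourth statement; for the converse I would start from the fourth statement, use bijectivity to make $\mathcal{T}$ a semigroup of permutations, and apply Lemma~\ref{lemma42} to upgrade ``$\mathcal{T}^{-1}w$ finite'' to ``$\mathcal{T}w=\mathcal{T}^{-1}w$ finite'' for all $w$, which are exactly the hypotheses of Lemma~\ref{lemma43}, delivering distality.

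The only point that requires care is the repeated observation that the common bijectivity clause is what legitimizes each appeal to Lemma~\ref{lemma42}, whose hypothesis is that $\mathcal{T}$ consist of permutations; with that in place the set-theoretic interchange between $\mathcal{T}w$ and $\mathcal{T}^{-1}w$ is available at every point it is needed. I expect no genuine obstacle beyond this, since the dynamical substance lives entirely in Lemmas~\ref{lemma41} and~\ref{lemma43}, and the theorem merely threads them together.
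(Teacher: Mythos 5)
Your proposal is correct and is essentially the paper's own argument: the paper gives no written-out proof of Theorem~\ref{theorem45}, stating only that it follows from Theorem~\ref{theorem35} and Lemmas~\ref{lemma41}, \ref{lemma42}, \ref{lemma43}, and your assembly (equicontinuity $\Leftrightarrow$ finiteness of all $\mathcal{T}w$ via Theorem~\ref{theorem35}(1)$\Leftrightarrow$(5); $\mathcal{T}w \leftrightarrow \mathcal{T}^{-1}w$ via Lemma~\ref{lemma42} once bijectivity makes $\mathcal{T}$ a semigroup of permutations; distality tied in by Lemma~\ref{lemma41} in one direction and Lemma~\ref{lemma43} in the other) is exactly the intended chain. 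Your explicit attention to the point that the bijectivity clause is what licenses each appeal to Lemma~\ref{lemma42} is a useful clarification that the paper leaves implicit.
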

%%%%%%%%%%%%%%%%%%%%%%%%%%%%%%%%%%%%
\section{Expansive generalized shift transformation semigroup $({\mathcal S},X^\Gamma)$}
\noindent In this section we prove that the transformation semigroup $({\mathcal S},X^\Gamma)$ 
is expansive, if and only if there exists finite subset $H$ of $\Gamma$ such that $\Gamma={\mathcal T}H$, 
in particular using Theorem\ref{theorem35} one may verify directly that if $({\mathcal S},X^\Gamma)$ is expansive, 
then it is sensitive.
%%%%%%%%%%%%%%%%%%%%%%%%%%%%%%%%%%%%
\begin{theorem}\label{theorem51}
The transformation semigroup $({\mathcal S},X^\Gamma)$ is expansive, if and 
only if there exists finite subset $H$ of $\Gamma$ such that $\Gamma={\mathcal T}H$.
\end{theorem}
%%%%%%%%%%%%%%%%%%%%%%%%%%%%%%%%%%%%
\begin{proof}
Suppose $({\mathcal S},X^\Gamma)$ is expansive, there exists 
$\alpha\in{\mathcal F}$ such that for all distinct $x,y\in X^\Gamma$ there exists 
$\varphi\in{\mathcal T}$ with $(\sigma_\varphi(x),\sigma_\varphi(y))\notin\alpha$. 
There exists finite subset $H$ of $\Gamma$ with $\alpha_H\subseteq\alpha$. 
Choose $v\in\Gamma$ and distinct $p,q\in X$ and let:
\[z_w:=\left\{\begin{array}{lc} p & w=v\:, \\ q & w\in\Gamma\setminus\{v\}\:.\end{array}\right.\]
There exists $\varphi\in{\mathcal T}$ with 
$((z_{\varphi(w)})_{w\in\Gamma},(q)_{w\in\Gamma})=(\sigma_\varphi(z_w)_{w\in\Gamma}),\sigma_\varphi((q)_{w\in\Gamma}))\notin\alpha$ 
which leads to $((z_{\varphi(w)})_{w\in\Gamma},(q)_{w\in\Gamma})\notin\alpha_H$, 
thus there exists $w\in H$ with $z_{\varphi(w)}\neq q$, and $\varphi(w)=v$, 
hence $v\in\varphi(H)$ and $\Gamma=\mathcal{T}H$.
\\
Now conversely, suppose there exists finite subset $H$ of $\Gamma$ 
such that $\Gamma=\mathcal{T}H$ also consider distinct 
$(x_v)_{v\in\Gamma},(y_v)_{v\in\Gamma}\in X^\Gamma$. There exists 
$w\in\Gamma$ with $x_w\neq y_w$. Also there exist $\varphi\in{\mathcal T}$ 
and $h\in H$ with $\varphi(h)=w$, thus $x_{\varphi(h)}\neq y_{\varphi(h)}$ and 
$((x_{\varphi(v)})_{v\in\Gamma},(y_{\varphi(v)})_{v\in\Gamma} )=(\sigma_\varphi((x_v)_{v\in\Gamma}),\sigma_\varphi((y_v)_{v\in\Gamma}) )\notin\alpha_H$.
\end{proof}
%%%%%%%%%%%%%%%%%%%%%%%%%%%%%%%%%%%%
\section{A diagram}
\noindent Now we are ready to compare equicontinuity, distality, sensitivity and expansivity in generalized 
shift transformation semigroups, via diagrams and examples. For this aim suppose $\mathcal C$ is the
 collection of all transformation semigroups like $({\mathcal E},Y^\Lambda)$ where $Y$ is a finite 
 discrete space with at least two elements, $\Lambda$ is a nonempty set and $\mathcal E$ is a 
 subsemigroup of generalized shifts on $Y^\Lambda$ (i.e., $\mathcal E$ is a subsemigroup of 
 $\{\sigma_\varphi:\varphi\in\Lambda^\Lambda\}$), then we have the following diagram:
 %%%%%%%%%%%%%%%%%%%%%%%%%%%%%%%%%%%%
\begin{center}
\begin{tabular}{|c|c|}
\hline & \\
\begin{tabular}{c}
Equicontinuous elements of $\mathcal C$ \\
\ref{example1} \\
\begin{tabular}{|c|}
\hline \\
Distal elements of $\mathcal C$ \\
\ref{example2} \\
\hline 
\end{tabular} \\
\end{tabular}
&
\begin{tabular}{c}
Sensitive elements of $\mathcal C$ \\
\ref{example3} \\
\begin{tabular}{|c|}
\hline \\
Expansive elements of $\mathcal C$ \\
\ref{example4} \\
\hline 
\end{tabular} \\
\end{tabular}
\\ &
\\ \hline 
\end{tabular}
\end{center}
 %%%%%%%%%%%%%%%%%%%%%%%%%%%%%%%%%%%%
 %%%%%%%%%%%%%%%%%%%%%%%%%%%%%%%%%%%%
 \begin{example}\label{example1}
 Consider $\varphi:{\mathbb Z}\to{\mathbb Z}$ with $\varphi(n)=|n|$ (for $n\in{\mathbb Z}$), 
 then 
 \linebreak
 $(\{\sigma_\varphi,\sigma_{{\rm id}_Z}\},\{0,1\}^{\mathbb Z})$ is equicontinuous and it is not distal 
 (note that $\sigma_\varphi:\{0,1\}^{\mathbb Z}\to\{0,1\}^{\mathbb Z}$ is not surjective).
 \end{example}
 %%%%%%%%%%%%%%%%%%%%%%%%%%%%%%%%%%%%
 \begin{example}\label{example2}
 Consider $\varphi:{\mathbb Z}\to{\mathbb Z}$ with $\varphi(n)=-n$ (for $n\in{\mathbb Z}$), 
 then 
 \linebreak
 $(\{\sigma_\varphi,\sigma_{{\rm id}_Z}\},\{0,1\}^{\mathbb Z})$  is distal.
 \end{example}
 %%%%%%%%%%%%%%%%%%%%%%%%%%%%%%%%%%%%
 \begin{example}\label{example3}
 Consider $\varphi:{\mathbb Z}\to{\mathbb Z}$ with $\varphi(n)=n^2$ (for $n\in{\mathbb Z}$), 
 then 
 \linebreak
 $(\{\sigma_{\varphi^n}:n\geq0\},\{0,1\}^{\mathbb Z})$  is sensitive and it is not expansive 
 (since $\{\varphi^n(2):n\geq0\}=\{2^{2^n}:n\geq0\}$ is infinite, $\{\varphi^n(1):n\geq0\}=\{1\}$ is 
 finite, and for all finite subset $A$ of $\mathbb Z$ the set $\{\varphi^n(i):i\in A,n\geq0\}=\{i^{2^n}:i\in A,n\geq0\}$ 
 is a proper subset of $\mathbb Z$). 
 \end{example}
 %%%%%%%%%%%%%%%%%%%%%%%%%%%%%%%%%%%%
 \begin{example}\label{example4}
 Consider $\varphi:{\mathbb Z}\to{\mathbb Z}$ with:
\[\varphi(n):=\left\{\begin{array}{lc} n+1 & n\geq1 \:, \\ 0 & n=0 \:, \\ n-1 & n\leq-1\:, \end{array}\right.\] 
then $(\{\sigma_{\varphi^n}:n\geq0\},\{0,1\}^{\mathbb Z})$ is expansive.
 \end{example}
 %%%%%%%%%%%%%%%%%%%%%%%%%%%%%%%%%%%%
 %%%%%%%%%%%%%%%%%%%%%%%%%%%%%%%%%%%%
 %%%%%%%%%%%%%%%%%%%%%%%%%%%%%%%%%%%%
 %%%%%%%%%%%%%%%%%%%%%%%%%%%%%%%%%%%%
%%%%%%%%%%%%%%%%%%%%%%%%%%%%%%%%%%%%

\noindent
{\small {\bf Fatemah Ayatollah Zadeh Shirazi},
Faculty of Mathematics, Statistics and Computer Science,
College of Science, University of Tehran ,
Enghelab Ave., Tehran, Iran
\\
({\it e-mail}: fatemah@khayam.ut.ac.ir)
\\
{\bf Fatemeh Ebrahimifar},
Faculty of Mathematics, Statistics and Computer Science,
College of Science, University of Tehran ,
Enghelab Ave., Tehran, Iran
\\
({\it e-mail}: ebrahimifar64@ut.ac.ir)}

\end{document}